\newtheorem{thm}{Theorem}
\newtheorem{lem}{Lemma}
\newtheorem{cor}{Corollary}
\theoremstyle{definition}
\newtheorem{defn}{Definition}
\renewcommand{\Re}{\mathbb R}
\newcommand{\M}{\mathbb{M}}
\DeclareMathOperator{\conv}{conv}
\DeclareMathOperator{\spn}{span}
\DeclareMathOperator{\id}{id}
\begin{document}
\title[On adjoint abelian operators]{On diagonalizable operators in
Minkowski spaces with the Lipschitz property}

\author[Z. L\'angi]{Zsolt L\'angi}
                                                                               
\address{Zsolt L\'angi, Dept. of Geometry, Budapest University of Technology,
Budapest, Egry J\'ozsef u. 1., Hungary, 1111}
\email{zlangi@math.bme.hu}
                                                                        
\subjclass{47A05, 52A21, 46B25}
\keywords{semi-inner-product space, Minkowski space, norm, adjoint abelian.}

\begin{abstract}
A real semi-inner-product space is a real vector space $\M$ equipped with
a function $[.,.] : \M \times \M \to \Re$ which is linear in its first variable, strictly positive
and satisfies the Schwartz inequality.
It is well-known that the function $||x|| = \sqrt{[x,x]}$ defines a norm on $\M$. and vica versa,
for every norm on $X$ there is a semi-inner-product satisfying this equality.
A linear operator $A$ on $\M$ is called \emph{adjoint abelian with respect to $[.,.]$},
if it satisfies $[Ax,y]=[x,Ay]$ for every $x,y \in \M$.
The aim of this paper is to characterize the diagonalizable adjoint abelian operators in finite
dimensional real semi-inner-product spaces satisfying a certain smoothness condition.
\end{abstract}
\maketitle

\section{Introduction and preliminaries}

A real semi-inner-product space is a real linear space $\M$ equipped with
a function $[.,.] : \M \times \M \to \Re$, called a \emph{semi-inner-product}, such that
\begin{enumerate}
\item[(1)] $[.,.]$ is linear in the first variable,
\item[(2)] $[x,x] \geq 0$ for every $x \in \M$, and $[x,x]=0$ yields that $x=0$,
\item[(3)] $[x,y]^2 \leq [x,x]\cdot [y,y]$ for every $x,y \in \M$.
\end{enumerate}

These spaces were introduced in 1961 by Lumer \cite{L61}, and have been extensively studied since then
(cf., for example \cite{D04}).
It was remarked in \cite{L61} that in a real semi-inner-product space $\M$, the function
$||x||=\sqrt{[x,x]}$ defines a norm. The converse also holds, i.e. if $\M$ is a real linear space, then
for every real norm $||.|| : \M \to \Re$,
there is a semi-inner-product $[.,.] : \M \times \M \to \Re$ satisfying $||x||=\sqrt{[x,x]}$.
Furthermore, the semi-inner-product determined by a norm is unique if, and only if,
its unit ball is smooth; that is, if the unit sphere has a unique supporting hyperplane at its every point.
By \cite{G67}, in this case the semi-inner-product is homogeneous in the second variable;
i.e., $[x,\lambda y]= \lambda [x,y]$ for any $x,y \in \M$ and $\lambda \in \Re$.

We say that a real semi-inner-product is \emph{continuous}, if for every $x,y,z \in \M$ with
$[x,x]=[y,y]=[z,z]=1$, $\lambda \to 0$ yields that $[x,y+\lambda z] \to [x,y]$ (cf. \cite{G67} or \cite{GH}).
It is well-known that the semi-inner-product determined by a smooth norm is continuous;
it follows, for example, from $E^*$ on page 118 of \cite{RV73} and Theorem 3 of \cite{G67}.

A linear operator $A$ is called \emph{adjoint abelian with respect to a semi-inner-product $[.,.]$}, if
it satisfies $[Ax,y]=[x,Ay]$ for every $x,y \in \M$ (cf., for instance \cite{FJ74} and \cite{FJ76}).

In the following, $\M$ denotes a smooth Minkowski space; that is, a real finite dimensional smooth normed space,
and $||.||$ and $[.,.]$ denote the norm and the induced semi-inner-product of $\M$, respectively.
We denote by $S$ the unit sphere with respect to the norm, i.e., we set $S = \{ x \in \M : ||x|| = 1 \}$.
We say that the semi-inner-product $[.,.]$ has the \emph{Lipschitz property}, if for every $x \in S$,
there is a real number $\kappa$ such that for every $y,z \in S$, we have
$\left| [x,y] - [x,z] \right| \leq \kappa ||y-z||$.
We note that in a similar way, a differentiability property of semi-inner-products
was defined in \cite{GH}, and that any semi-inner-product satisfying that
differentiability property satisfies also the Lipschitz property.

The aim of this paper is to characterize the diagonalizable adjoint abelian operators
in finite dimensional spaces with a semi-inner-product that satisfies the Lipschitz property.

To formulate our main result, we need the following notions and notations.
An isometry of $\M$ is an operator $A : \M \to \M$ satisfying $|| A x|| = ||x||$ for every $x \in \M$,
or, equivalently, $[Ax,Ay]=[x,y]$ for every $x,y \in \M$ (cf. \cite{K71}).
For the properties of isometries in Minkowski spaces, the interested
reader is referred to \cite{MS09}.

For the following definition, see also \cite{G67}.

\begin{defn}
If $x,y \in \M$ and $[x,y]=0$, we say that \emph{$x$ is transversal to $y$}, or \emph{$y$ is normal to $x$}.
If $X,Y \subset \M$ such that $[x,y]=0$ for every $x \in X$ and $y \in Y$,
we say that \emph{$X$ is transversal to $Y$} or \emph{$Y$ is normal to $X$}.
\end{defn}

\begin{defn}\label{defn:directsum}
Let $U$ and $V$ be linear subspaces of $\M$ such that $\M = U \oplus V$.
If for every $x_u, y_u \in U$ and $x_v, y_v \in V$, we have
\[
[ x_u + x_v, y_u + y_v ] = [x_u,y_u] + [x_v, y_v],
\]
then we say that the semi-inner-product $[.,.]$ is the \emph{direct sum of
$[.,.]|_U$ and $[.,.]|_V$}, and denote it by $[.,.]=[.,.]|_U + [.,.]|_V$.
If there are no such linear subspaces of $\M$, we say that $[.,.]$ is \emph{non-decomposable}.
\end{defn}

We remark that if $[.,.] = [.,.]|_U + [.,.]_V$ for some linear subspaces $U$ and $V$, then $U$ and $V$
are both transversal and normal, and that the converse does not hold. We note also that any two semi-inner-product spaces can be added in this way (cf. \cite{GH}). 
Definition~\ref{defn:directsum} can be formulated for finitely many subspaces as well in the natural way.
For the simplicity of notation, we mean that every semi-inner-product space is the direct sum of itself.

Let $A : \M \to \M$ be a linear operator, and let $\lambda_1 > \lambda_2 > \ldots > \lambda_k \geq 0$
be the absolute values of the eigenvalues of $A$.
If $\lambda_i$ is an eigenvalue of $A$, then $E_i$ denotes the eigenspace of $A$ belonging to $\lambda_i$,
and if $\lambda_i$ is not an eigenvalue, we set $E_i= \{ 0 \}$.
We define $E_{-i}$ similarly with $-\lambda_i$ in place of $\lambda_i$, and set $\bar{E}_i=\spn(E_i \cup E_{-i})$.
Our main theorem is the following.

\begin{thm}\label{thm:main}
Let $\M$ be a smooth Minkowski space such that the induced semi-inner-product $[.,.] : \M \times \M \to \Re$
satisfies the Lipschitz condition, and let $A : \M \to \M$ be a diagonalizable linear operator.
Then $A$ is adjoint abelian with respect to $[.,.]$ if, and only if, the following hold.
\begin{enumerate}
\item $[.,.]$ is the direct sum of its restrictions to the subspaces $\bar{E}_i$, $i=1,2,\ldots,k$;
\item for every value of $i$, the subspaces $E_i$ and $E_{-i}$ are both transversal and normal;
\item for every value of $i$, the restriction of $A$ to $\bar{E}_i$ is the product of $\lambda_i$ and
an isometry of $\bar{E}_i$.
\end{enumerate}
\end{thm}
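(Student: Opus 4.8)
The easy direction is $(1)$–$(3)$ $\Rightarrow$ adjoint abelian. Given a vector $x = \sum x_i$ and $y = \sum y_i$ with $x_i, y_i \in \bar E_i$, property $(1)$ reduces $[Ax, y]$ and $[x, Ay]$ to sums of the terms $[A|_{\bar E_i} x_i, y_i]$ and $[x_i, A|_{\bar E_i} y_i]$, so it suffices to check the adjoint abelian identity on each $\bar E_i$ separately. On $\bar E_i$ we have $A|_{\bar E_i} = \lambda_i U_i$ with $U_i$ an isometry, so by homogeneity of the semi-inner-product in the second variable and the isometry characterization $[U_i x_i, U_i y_i] = [x_i, y_i]$. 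Writing $U_i$ on $\bar E_i$ in the eigenbasis, $U_i$ acts as $+1$ on $E_i$ and $-1$ on $E_{-i}$; using transversality and normality of $E_i$ and $E_{-i}$ from $(2)$ together with the direct-sum decomposition of $[.,.]|_{\bar E_i}$ into $[.,.]|_{E_i} + [.,.]|_{E_{-i}}$, one computes directly that $[A|_{\bar E_i} x_i, y_i] = \lambda_i[x_i, y_i'] = [x_i, A|_{\bar E_i} y_i]$ where the sign flip on $E_{-i}$ is absorbed; the point is that $[u, -v] = -[u,v]$ and $[-u, v]$ equals $[u, -v]$ in the relevant combinations because the cross terms vanish. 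I would carry this out one block at a time, then sum.

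The substantive direction is: $A$ adjoint abelian and diagonalizable $\Rightarrow$ $(1)$–$(3)$. The first step is to observe that adjoint abelian operators behave well with respect to the semi-inner-product geometry: I would show that eigenspaces belonging to eigenvalues of distinct absolute value are mutually transversal and normal. Concretely, if $Ax = \mu x$ and $Ay = \nu y$ with $|\mu| \neq |\nu|$, then $\mu[x,y] = [Ax, y] = [x, Ay]$, and one wants to conclude $[x,y] = 0$. This requires care because $[.,.]$ is only linear in the first slot; the Lipschitz property (or continuity) of the semi-inner-product is what lets me differentiate/estimate $[x, y + \lambda z]$ and push the argument through. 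From $\mu[x,y] = [x, \nu y]$ and homogeneity in the second variable when the argument is a scalar multiple, I get $\mu [x,y] = \nu [x,y]$, hence $[x,y]=0$; symmetrically $[y,x]=0$. Doing this across all pairs $i \neq j$ gives that the $\bar E_i$ are pairwise transversal and normal, and within $\bar E_i$ that $E_i$ and $E_{-i}$ are transversal and normal, which is $(2)$.

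The hard part is upgrading pairwise transversality/normality of the $\bar E_i$ to the genuine direct-sum decomposition $(1)$ of the semi-inner-product, i.e. that the cross terms $[x_u + x_v, y_u + y_v]$ split additively — transversality alone does not give this, as the excerpt explicitly warns. Here I expect to need the Lipschitz property in an essential way: the plan is to use the one-sided directional structure of $[.,.]$, writing $[x, y] = \|x\| \cdot (\text{Gateaux derivative of } \|\cdot\| \text{ at } x \text{ in direction } y)$, and to exploit that $A$ (being $\lambda_i \times$ isometry on each block, once we know that) preserves the norm up to scaling on each $\bar E_i$ and hence preserves the supporting-hyperplane structure blockwise. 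Running the adjoint abelian identity with $x$ in one block and $y$ split across blocks, and iterating with $A^n$ so that the $\lambda_i^n$ separate the scales, should force the supporting functional at a point of $\bar E_i$ to annihilate the other blocks; this is where the Lipschitz bound controls the error terms in the limit. Once $(1)$ is established, restricting $A$ to $\bar E_i$ yields an operator with $\|A|_{\bar E_i} x\| = \lambda_i \|x\|$ (from the fact that on $\bar E_i$ all eigenvalues have absolute value $\lambda_i$ together with the adjoint abelian norm identity), and $\tfrac{1}{\lambda_i} A|_{\bar E_i}$ is then an isometry of $\bar E_i$ by the norm characterization of isometries, giving $(3)$; if $\lambda_i = 0$ the block is trivial. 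I would single out the derivation of $(1)$ as the main obstacle and the place where the Lipschitz hypothesis, rather than mere smoothness, is actually used.
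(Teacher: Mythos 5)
Your outline of the easy direction, of condition (2), and of condition (3) matches the paper's argument. For the central claim (1), your idea of iterating $A$ so that the powers $\lambda_i^n$ separate the scales, with the Lipschitz hypothesis controlling the limit, is indeed the engine of the paper's Lemma~\ref{lem:decomposition}: one shows $[z,x]=[z,x_i]$ for $z\in\bar E_i$ by writing $[z,x]=[z,\sum_j(\lambda_j^2/\lambda_i^2)^n x_j]$. But as stated your plan has two genuine gaps. First, for $j$ with $\lambda_j>\lambda_i$ the coefficients $(\lambda_j^2/\lambda_i^2)^n$ blow up rather than vanish, so the second argument of the semi-inner-product runs off to infinity; ordinary continuity (even uniform continuity on the compact unit ball) says nothing here. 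The paper handles this by a two-stage argument: first it inverts $A$ on the subspace $F_i=\spn(\bigcup_{j\le i}\bar E_j)$ and iterates the inverse to kill the large-eigenvalue components, and only then attacks the full $x$, at which point the surviving unbounded sequence has \emph{constant} semi-inner-product against $z$ and the tail tends to $0$ in norm, so that Lemma~\ref{lem:uniform} (uniform continuity on all of $\M$, which is exactly what the Lipschitz condition buys beyond smoothness) closes the limit. Your phrase ``the Lipschitz bound controls the error terms'' does not yet contain this mechanism, and without the detour through the inverse the divergent terms are fatal. There is also a latent circularity in your appeal to ``$A$ being $\lambda_i$ times an isometry on each block, once we know that'': (3) is derived \emph{after} (1) and cannot be used to prove it.

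Second, and more seriously, your plan cannot reach the case $\lambda_i=0$. If $z$ lies in the kernel block $E_k$, neither $A$ nor any inverse isolates that component: applying $A$ annihilates it, and $A$ is not invertible on any subspace containing it, so no scale-separation argument produces $[z,x]=[z,x_k]$. The paper devotes all of Case 3 of Lemma~\ref{lem:decomposition} to this, and the argument there is of a completely different, geometric nature: one first shows that $F=\spn(\bigcup_{j<k}\bar E_j)$ and $E_k$ are mutually transversal and normal, and then proves, via the projection description of the induced semi-inner-product and an ODE $f'(x_0)=-x_0f(x_0)/(1-x_0^2)$ for the boundary curve, that the relevant two- and three-dimensional sections of the unit sphere are Euclidean circles/ellipsoids, from which the additivity $[z,x_f+x_k]=[z,x_k]$ follows. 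Nothing in your proposal substitutes for this step, so the decomposition (1) is unproved whenever $0$ is an eigenvalue of $A$.
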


From Theorem~\ref{thm:main}, we readily obtain the following corollary.

\begin{cor}\label{cor:nondecomp}
Let $\M$ be a smooth Minkowski space such that the induced semi-inner-product $[.,.]$ satisfies the Lipschitz condition.
Then the following are equivalent.
\begin{enumerate}
\item $[.,.]$ is non-decomposable;
\item every diagonalizable adjoint abelian linear operator of $\M$ is a scalar multiple of an isometry of $\M$.
\end{enumerate}
\end{cor}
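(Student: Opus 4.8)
The plan is to read off both implications of the corollary directly from Theorem~\ref{thm:main}, using in addition only two elementary facts: that since $\M$ is smooth the semi-inner-product $[.,.]$ is homogeneous in its second variable (by \cite{G67}, as recalled in the introduction), and that every (necessarily real) eigenvalue of an isometry $B$ has absolute value $1$ — indeed if $Bx=\mu x$ with $x\neq 0$, then $|\mu|\,\|x\|=\|Bx\|=\|x\|$.

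For the implication $(1)\Rightarrow(2)$, I would take a diagonalizable adjoint abelian operator $A$ and keep the notation $\lambda_1>\dots>\lambda_k\geq 0$, $E_i$, $E_{-i}$, $\bar E_i$ from the paragraph preceding Theorem~\ref{thm:main}. Part~(1) of that theorem gives $\M=\bigoplus_{i=1}^k\bar E_i$ with $[.,.]=\sum_{i=1}^k[.,.]|_{\bar E_i}$. Since each $\lambda_i$ is the absolute value of a genuine eigenvalue of $A$, every $\bar E_i$ is nonzero; hence if $k\geq 2$ one could split $\{1,\dots,k\}$ into two nonempty blocks and thereby write $[.,.]$ as a direct sum with both summands nonzero, contradicting that $[.,.]$ is non-decomposable. (This is where one invokes the convention, stated after Definition~\ref{defn:directsum}, that makes "non-decomposable" mean "no splitting into two nonzero summands compatible with $[.,.]$".) Therefore $k=1$ and $\bar E_1=\M$, so part~(3) of the theorem yields $A=\lambda_1 B$ for some isometry $B$ of $\M$, which is exactly statement~(2).

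For $(2)\Rightarrow(1)$ I would argue by contraposition. Suppose $[.,.]$ decomposes as $[.,.]=[.,.]|_U+[.,.]|_V$ with $\M=U\oplus V$ and $U,V\neq\{0\}$. Define the linear operator $A$ by $A(x_u+x_v)=x_u+2x_v$ for $x_u\in U$, $x_v\in V$; it is diagonalizable, with eigenspace $U$ for the eigenvalue $1$ and $V$ for the eigenvalue $2$. Using linearity of $[.,.]$ in the first variable, homogeneity in the second, and Definition~\ref{defn:directsum}, one checks immediately that $[Ax,y]=[x_u,y_u]+2[x_v,y_v]=[x,Ay]$ for all $x=x_u+x_v$ and $y=y_u+y_v$, so $A$ is adjoint abelian. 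If $A$ were a scalar multiple $cB$ of an isometry $B$, then $B=c^{-1}A$ would have the two eigenvalues $c^{-1}$ and $2c^{-1}$, of distinct absolute values $|c|^{-1}$ and $2|c|^{-1}$, contradicting that every eigenvalue of an isometry has absolute value $1$. Hence $A$ witnesses the failure of~(2).

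This is essentially a bookkeeping exercise once Theorem~\ref{thm:main} is in hand, so I do not anticipate a real obstacle. The two points that deserve a moment's care are: in the first implication, the legitimacy of grouping the $\bar E_i$'s and of appealing to the "direct sum of itself" convention to conclude decomposability when $k\geq 2$; and in the converse, choosing the eigenvalues of the test operator $A$ with different moduli (rather than, say, $1$ and $-1$, which would make $A$ itself an isometry) so that $A$ is provably not a scalar times an isometry.
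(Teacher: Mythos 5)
Your proof is correct and follows the paper's intended route: the paper gives no explicit argument, stating only that the corollary is readily obtained from Theorem~\ref{thm:main}, and your two implications supply exactly the bookkeeping that claim presupposes. In particular, the grouping of the nonzero subspaces $\bar{E}_i$ to contradict non-decomposability when $k\geq 2$, and the test operator $x_u+x_v\mapsto x_u+2x_v$ (whose eigenvalues have distinct moduli, so it cannot be a scalar multiple of an isometry) for the converse, are both sound.
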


Note that if $A$ is not diagonal, then we may apply Theorem~\ref{thm:main} for the span of the eigenspaces of $A$.

\begin{cor}\label{cor:nondiagonal}
Let $\M$ be a smooth Minkowski space such that the induced semi-inner-product $[.,.] : \M \times \M \to \Re$
satisfies the Lipschitz condition, and
let $A : \M \to \M$ be an adjoint abelian linear operator with respect to $[.,.]$.
Then (1), (2) and (3) in Theorem~\ref{thm:main} hold for $A$ .
\end{cor}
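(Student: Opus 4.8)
\emph{Proof plan for Corollary~\ref{cor:nondiagonal}.} The plan is to deduce the corollary from Theorem~\ref{thm:main} by passing to the subspace spanned by the eigenvectors of $A$. Put $N=\bar E_1\oplus\bar E_2\oplus\cdots\oplus\bar E_k=\spn\big(\bigcup_i(E_i\cup E_{-i})\big)$. Since $N$ is spanned by eigenvectors of $A$, it is $A$-invariant, and the restriction $A|_N\colon N\to N$ is diagonalizable with exactly the same eigenspaces $E_i,E_{-i}$ — hence the same subspaces $\bar E_i$ — as $A$. Moreover $A|_N$ is adjoint abelian with respect to $[.,.]|_N$, because for $x,y\in N$ we have $Ax,Ay\in N$ and $[A|_Nx,y]=[Ax,y]=[x,Ay]=[x,A|_Ny]$.

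Next I would verify that $N$, equipped with the restricted norm and the semi-inner-product $[.,.]|_N$, satisfies the hypotheses of Theorem~\ref{thm:main}. A linear subspace of a smooth normed space is again smooth: if $f_1,f_2$ are norm-one functionals on $N$ supporting the unit ball of $N$ at a common point $u\in S\cap N$, then by Hahn--Banach each extends to a norm-one functional on $\M$, both extensions support the unit ball of $\M$ at $u$, so by smoothness of $\M$ they coincide, forcing $f_1=f_2$ on $N$. Consequently $[.,.]|_N$ — which is clearly linear in the first variable, strictly positive, and satisfies the Schwartz inequality, with $\sqrt{[x,x]}$ equal to the restricted norm — is, by uniqueness, the semi-inner-product induced by the restricted norm on $N$; equivalently, a supporting functional of the unit ball of $\M$ restricts to the supporting functional of the unit ball of $N$ at the same point. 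Finally, the Lipschitz property descends: for $x\in S\cap N$, the constant $\kappa$ that works for all $y,z\in S$ works a fortiori for all $y,z\in S\cap N$.

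Thus Theorem~\ref{thm:main} applies to $N$ and the diagonalizable adjoint abelian operator $A|_N$, and gives: $[.,.]|_N$ is the direct sum of its restrictions to the $\bar E_i$; each pair $E_i,E_{-i}$ is both transversal and normal; and $A|_N$ restricted to $\bar E_i$ is $\lambda_i$ times an isometry of $\bar E_i$. Since $N=\bigoplus_i\bar E_i$ and the conclusions (1), (2), (3) of Theorem~\ref{thm:main} refer only to the subspaces $E_{\pm i}$ and $\bar E_i$ — all contained in $N$ and unchanged on replacing $A$ by $A|_N$ — these are precisely statements (1), (2), (3) for $A$, which completes the argument. The only genuine content is the descent of smoothness and of the Lipschitz property to $N$ together with the identification of $[.,.]|_N$ as the induced semi-inner-product of $N$; I expect this to be the main (though still routine) point, everything else being bookkeeping about matching eigenspaces.
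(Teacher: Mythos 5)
Your proposal is correct and follows exactly the route the paper indicates: the paper's entire justification is the one-line remark that one ``may apply Theorem~\ref{thm:main} for the span of the eigenspaces of $A$''. You simply fill in the (routine but worthwhile) details of why smoothness, the induced semi-inner-product, and the Lipschitz property all descend to that span.
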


If $[.,.] = [.,.]|_U + [.,.]|_V$ for some subspaces $U$ and $V$,
and $u \in U$ and $v \in V$, then, by Theorem~\ref{thm:main}, $S \cap \spn \{ u,v \}$ is an ellipse.
This observation is proved, for example, in Statement 1 of \cite{GH}.
Thus, we have the following.

\begin{cor}\label{cor:no_ellipse}
Let $\M$ be a smooth Minkowski space such that the induced semi-inner-product satisfies the Lipschitz condition.
If no section of the unit sphere $S$ with a plane is an ellipse with the origin as its centre,
then every diagonalizable adjoint abelian operator of $\M$ is a scalar multiple of an isometry of $\M$.
\end{cor}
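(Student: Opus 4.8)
The plan is to reduce everything to Corollary~\ref{cor:nondecomp}: it suffices to show that, under the hypothesis that no planar section of $S$ through the origin is an ellipse, the semi-inner-product $[.,.]$ is non-decomposable, and then Corollary~\ref{cor:nondecomp} gives the conclusion at once. I would prove the contrapositive.

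So suppose $[.,.]$ is decomposable; by Definition~\ref{defn:directsum} this means $[.,.]=[.,.]|_U+[.,.]|_V$ for some linear subspaces $U,V$ of $\M$ with $\M=U\oplus V$ and $U,V\neq\{0\}$. Choose nonzero $u\in U$ and $v\in V$. Since $U\cap V=\{0\}$, the vectors $u,v$ are linearly independent, so $P:=\spn\{u,v\}$ is a plane through the origin. For all $\alpha,\beta\in\Re$ the defining property of the direct sum, together with $||x||^2=[x,x]$, gives
\[
||\alpha u+\beta v||^2 = [\alpha u+\beta v,\,\alpha u+\beta v] = [\alpha u,\alpha u]+[\beta v,\beta v] = \alpha^2||u||^2+\beta^2||v||^2 .
\]
Hence $S\cap P$, read in the coordinates $(\alpha,\beta)$, is the solution set of $\alpha^2||u||^2+\beta^2||v||^2=1$, which is an ellipse centred at the origin; this is exactly the observation preceding the corollary (Statement~1 of~\cite{GH}). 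This contradicts the hypothesis, so $[.,.]$ must be non-decomposable, and Corollary~\ref{cor:nondecomp} then yields that every diagonalizable adjoint abelian operator of $\M$ is a scalar multiple of an isometry of $\M$.

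The heavy lifting here is all contained in Theorem~\ref{thm:main} (through Corollary~\ref{cor:nondecomp}) and in the cited planar observation, so I do not expect a genuine obstacle; the only point needing a moment's care is that the ellipse extracted from a direct-sum decomposition genuinely lies in a plane and is genuinely centred at the origin, so that it does fall under the stated hypothesis. One could alternatively bypass Corollary~\ref{cor:nondecomp} and argue directly: if some diagonalizable adjoint abelian $A$ were not a scalar multiple of an isometry, then $k\geq 2$ and Theorem~\ref{thm:main}(1) splits $[.,.]$ across the subspaces $\bar E_i$, whence picking nonzero vectors from two different summands $\bar E_i$ produces the forbidden elliptic central section in the same way.
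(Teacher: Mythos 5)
Your proposal is correct and follows essentially the same route as the paper: the paper also deduces the corollary from the observation that a direct-sum decomposition $[.,.]=[.,.]|_U+[.,.]|_V$ forces $S\cap\spn\{u,v\}$ to be an origin-centred ellipse (citing Statement~1 of \cite{GH} for this), combined with Corollary~\ref{cor:nondecomp}. The only difference is that you verify the ellipse observation by the direct computation $||\alpha u+\beta v||^2=\alpha^2||u||^2+\beta^2||v||^2$ instead of citing it, which is a harmless (and correct) addition.
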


In the proof of Theorem~\ref{thm:main}, we need the following lemma.

\begin{lem}\label{lem:uniform}
Let $\M$ be a smooth Minkowski space.
Let $||.||$, $[.,.]$  and $S$ denote the norm, the associated semi-inner-product and the unit sphere of $\M$.
Then the following are equivalent.
\begin{enumerate}
\item[(1)] $[.,.]$ satisfies the Lipschitz condition;
\item[(2)] for every $x \in \M$, the function $f_x : \M \to \Re$, $f_x(y) = [x,y]$ is uniformly continuous
on $\M$; that is, for every $x \in \M$ and $\varepsilon > 0$ there is a $\delta > 0$ such that $y,z \in \M$
and $||y-z|| < \delta$ imply $| [x,y] - [x,z] | < \varepsilon$;
\item[(3)] for every $x \in \M$ and any sequences $\{ y_n\}$, $\{ z_n \}$ in $\M$, if $||y_n - z_n || \to 0$,
then $| [x,y_n] - [x,z_n] | \to 0$.
\end{enumerate}
\end{lem}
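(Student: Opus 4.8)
The plan is to prove the cycle of implications $(1) \Rightarrow (3) \Rightarrow (2) \Rightarrow (1)$, since $(2) \Rightarrow (3)$ is immediate and $(3) \Rightarrow (2)$ is a standard sequential-continuity argument that I will spell out carefully because the domain $\M$ is not compact.

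\begin{proof}
We prove $(1) \Rightarrow (3) \Rightarrow (2) \Rightarrow (1)$; the implication $(2) \Rightarrow (3)$ is trivial, and together these establish the equivalence of all three.

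$(1) \Rightarrow (3)$. Fix $x \in \M$. If $x = 0$ then $f_x \equiv 0$ and there is nothing to prove, so assume $x \neq 0$ and, replacing $x$ by $x/\|x\|$ (which only scales $f_x$ by $\|x\|$), assume $\|x\|=1$. Let $\kappa$ be the constant provided by the Lipschitz condition for this $x$. Let $\{y_n\}, \{z_n\}$ be sequences in $\M$ with $\|y_n - z_n\| \to 0$. We must show $|[x,y_n]-[x,z_n]| \to 0$. The difficulty is that the Lipschitz estimate is stated only for unit vectors, while $y_n, z_n$ are arbitrary, and they could both be large; so a direct application does not work. I would handle this by using homogeneity of the semi-inner-product in the second variable, which holds because $\M$ is smooth (Glickfeld, \cite{G67}): $[x, \lambda y] = \lambda [x,y]$. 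Write $a_n = \|y_n\|$ and $b_n = \|z_n\|$. From $\|y_n-z_n\|\to 0$ and the reverse triangle inequality, $|a_n - b_n| \to 0$. If along a subsequence $a_n$ (hence $b_n$) is bounded, then on that subsequence $[x,y_n] = a_n [x, y_n/a_n]$ when $a_n \neq 0$ (and is $0$ when $a_n=0$), and using $\big|[x, y_n/a_n] - [x, z_n/b_n]\big| \le \kappa \|y_n/a_n - z_n/b_n\|$ together with $\|y_n/a_n - z_n/b_n\| \le \frac{1}{a_n}\|y_n - z_n\| + \big|\frac{1}{a_n} - \frac{1}{b_n}\big|\,b_n$ — after also separating out the cases where $a_n$ or $b_n$ vanishes, where the bound $|[x,y_n]| \le a_n$ makes things easy — one gets $[x,y_n]-[x,z_n] \to 0$ provided the $a_n$ stay away from $0$; if $a_n \to 0$ along a further subsequence then $|[x,y_n]| \le \|x\|\,a_n \to 0$ and likewise $b_n \to 0$, so the difference tends to $0$ anyway. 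The remaining case is that $a_n \to \infty$ (passing to a subsequence). Here I would instead divide through by $a_n$: set $u_n = y_n / a_n$, a unit vector, and $v_n = z_n / a_n$; then $\|u_n - v_n\| = \|y_n - z_n\|/a_n \to 0$, and $\|v_n\| = b_n/a_n \to 1$. So $v_n/\|v_n\|$ is a unit vector with $\|u_n - v_n/\|v_n\|\| \to 0$, whence $|[x,u_n] - [x, v_n/\|v_n\|]| \le \kappa \|u_n - v_n/\|v_n\|\| \to 0$; multiplying by $a_n$ and using homogeneity, $[x,y_n] = a_n [x,u_n]$ and $[x,z_n] = a_n \|v_n\| [x, v_n/\|v_n\|]$, so $[x,y_n]-[x,z_n] = a_n\big([x,u_n] - \|v_n\|[x,v_n/\|v_n\|]\big)$; here the bracket is $O(\kappa \|u_n - v_n/\|v_n\|\|) + O(|1-\|v_n\||)\cdot O(1)$, and the first term is $O(\|y_n-z_n\|/a_n)$ while the second is $O(|a_n - b_n|/a_n)$, each of which when multiplied by $a_n$ gives $O(\|y_n-z_n\|) + O(|a_n-b_n|) \to 0$. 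Since every subsequence of $\{[x,y_n]-[x,z_n]\}$ has a further subsequence tending to $0$, the whole sequence tends to $0$.

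$(3) \Rightarrow (2)$. Suppose $(2)$ fails: there is $x \in \M$ and $\varepsilon > 0$ such that for every $\delta > 0$ there exist $y, z \in \M$ with $\|y-z\| < \delta$ but $|[x,y]-[x,z]| \ge \varepsilon$. Taking $\delta = 1/n$ yields sequences $\{y_n\}, \{z_n\}$ with $\|y_n - z_n\| < 1/n \to 0$ but $|[x,y_n]-[x,z_n]| \ge \varepsilon$ for all $n$, contradicting $(3)$.

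$(2) \Rightarrow (1)$. Fix $x \in S$. By $(2)$ applied to this $x$ with $\varepsilon = 1$, there is $\delta > 0$ such that $\|y-z\| < \delta$ implies $|[x,y]-[x,z]| < 1$. Now take any $y, z \in S$. Partition the segment $[y,z]$ into $N = \lceil \|y-z\|/\delta \rceil$ equal pieces by points $y = w_0, w_1, \dots, w_N = z$ with $w_j = (1-j/N)y + (j/N)z$; consecutive points satisfy $\|w_j - w_{j-1}\| = \|y-z\|/N < \delta$, so $|[x,w_j] - [x,w_{j-1}]| < 1$, and summing the telescoping differences gives $|[x,y]-[x,z]| < N \le \|y-z\|/\delta + 1$. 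This bounds $|[x,y]-[x,z]|$ linearly but has an additive constant, which is not yet the Lipschitz form. To remove the constant, note that if $\|y-z\| \ge \delta$ then $|[x,y]-[x,z]| \le |[x,y]| + |[x,z]| \le 2$ (Schwartz, since $\|x\|=\|y\|=\|z\|=1$), and $2 \le (2/\delta)\|y-z\|$; while if $\|y-z\| < \delta$ then the above with $N=1$ gives $|[x,y]-[x,z]| < 1$, and I would like $1 \le \kappa\|y-z\|$, which fails for small $\|y-z\|$ — so instead, for $\|y-z\| < \delta$ I use the finer estimate: split $[y,z]$ into $N = \max\{1, \lceil \|y-z\|/\delta'\rceil\}$ pieces for a suitable $\delta'$, but the cleanest route is to observe that the sum $|[x,y]-[x,z]| \le \sum_{j=1}^N |[x,w_j]-[x,w_{j-1}]|$, with $N = \lceil \|y-z\|/\delta\rceil \le \|y-z\|/\delta$ whenever $\|y-z\| \ge \delta$, already gives the bound $|[x,y]-[x,z]| \le (1/\delta)\|y-z\|$ in the regime $\|y-z\| \ge \delta$ without the $+1$ once we use $N \le \|y-z\|/\delta + 1 \le 2\|y-z\|/\delta$; combining with the case $\|y-z\| < \delta$ handled by: again subdivide, now into $N=\lceil 2\|y-z\|/\delta\rceil$ pieces each of length $\|y-z\|/N < \delta/2 < \delta$, giving $|[x,y]-[x,z]| < N \le 2\|y-z\|/\delta + 1$, still with a constant. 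The honest fix is: the segment need not be subdivided from both endpoints; instead I use $(2)$ with $\varepsilon_n = 1/n$ only conceptually — more simply, set $\kappa = 2/\delta$ and check both regimes directly: for $\|y-z\|\ge \delta/2$ Schwartz gives $|[x,y]-[x,z]| \le 2 \le (4/\delta)\|y-z\|$; for $\|y-z\| < \delta/2$, with $N = 1$ we only get $< 1$, so instead subdivide into $N = \lceil 2\|y-z\|/\delta \rceil$ equal parts — but $\|y-z\| < \delta/2$ forces $N=1$. This shows the additive constant genuinely cannot be removed by subdivision alone; the resolution is that on $S$, by compactness and $(2)$, the function $y \mapsto [x,y]$ restricted to $S$ is uniformly continuous, and a uniformly continuous function on the (compact, hence bounded) unit sphere that is additionally \emph{piecewise controlled by the chord-counting estimate above} is Lipschitz — concretely, for $\|y-z\| < \delta$ pick $w$ on the sphere $S$ near the segment; but $S$ is convex's boundary, not the segment. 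The clean argument: for $y,z \in S$ with $0 < \|y-z\| < \delta$, consider $z' = y + \delta \frac{z-y}{\|z-y\|}$, so $\|z'-y\| = \delta$; $z'$ need not be on $S$, but $z$ lies on the segment $[y, z']$, so $[x,z]$ is a convex combination... this still uses only linearity of $[x,\cdot]$ in the second slot, which holds. Then $[x,z] - [x,y] = \frac{\|z-y\|}{\delta}\big([x,z'] - [x,y]\big)$ by linearity and homogeneity, and $|[x,z']-[x,y]| < 1$ since $\|z'-y\| = \delta$ is at the threshold — taking $\delta$ to correspond to $\varepsilon = 1$ gives $<1$ for $\|\cdot\| < \delta$, so at $=\delta$ we need a strict version; shrinking to $\delta/2$ throughout, $\|z'-y\|=\delta/2 < \delta$ gives $|[x,z']-[x,y]| < 1$, hence $|[x,z]-[x,y]| = \frac{2\|z-y\|}{\delta}|[x,z']-[x,y]| < \frac{2}{\delta}\|z-y\|$. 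Combined with the Schwartz bound $|[x,y]-[x,z]| \le 2 \le \frac{4}{\delta}\|y-z\|$ for $\|y-z\| \ge \delta/2$, we conclude $|[x,y]-[x,z]| \le \frac{4}{\delta}\|y-z\|$ for all $y,z \in S$, i.e. $(1)$ holds with $\kappa = 4/\delta$. This completes the cycle.
\end{proof}
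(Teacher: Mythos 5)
Your implications $(1)\Rightarrow(3)$ and $(3)\Rightarrow(2)$ are essentially correct (the first is, up to bookkeeping, the same rescaling-to-the-unit-sphere argument the paper uses), but the closing direction $(2)\Rightarrow(1)$ contains a fatal error. At the decisive step you write that for $z' = y + \delta\frac{z-y}{\|z-y\|}$ one has $[x,z]-[x,y]=\frac{\|z-y\|}{\delta}\bigl([x,z']-[x,y]\bigr)$ ``by linearity and homogeneity'' in the second slot. A semi-inner-product is linear only in its \emph{first} variable; in the second variable it is (for a smooth norm) homogeneous but in general \emph{not additive}, so $[x,\cdot]$ is not affine along the segment from $y$ to $z'$ and this identity fails. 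Your own earlier attempts show why some structural input beyond uniform continuity is unavoidable: uniform continuity alone never implies a Lipschitz bound (think of $t\mapsto\sqrt{|t|}$), so the telescoping and subdivision estimates necessarily leave an additive constant, exactly as you observed before abandoning them.

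The paper closes the loop differently, by proving the contrapositive $\neg(1)\Rightarrow\neg(3)$, and this is the idea you are missing. If the Lipschitz condition fails at some $x\in S$, pick $y_n,z_n\in S$ with $|[x,y_n]-[x,z_n]|=\kappa_n\|y_n-z_n\|$ and $\kappa_n\to\infty$, set $\delta_n=\|y_n-z_n\|>0$, and rescale \emph{both} points by the same factor: $u_n=y_n/(\kappa_n\delta_n)$ and $v_n=z_n/(\kappa_n\delta_n)$. Homogeneity in the second variable (which does hold in a smooth space) gives $\|u_n-v_n\|=1/\kappa_n\to 0$ while $|[x,u_n]-[x,v_n]|=1$, so $(3)$ fails. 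This uses only the one property of $[x,\cdot]$ that is actually available and avoids any appeal to additivity in the second argument. Replacing your $(2)\Rightarrow(1)$ paragraph with this argument --- which, combined with your correct $(1)\Rightarrow(3)$ and the trivial equivalence of $(2)$ and $(3)$ --- completes the proof.
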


\begin{proof}
Note that (2) and (3) are equivalent. We prove that (1) and (3) are equivalent.

First we show that (1) yields (3).
Observe that since $[x,y]$ is homogeneous in $x$, it suffices to prove (3) for $x \in S$.
Let $x \in S$, and assume that there is a number $\kappa \in \Re$ such that for every $y,z \in S$,
we have $\left| [x,y]-[x,z] \right| < \kappa ||y-z||$.
Consider the sequences $\{ y_n \}$ and $\{ z_n \}$ in $\M$, and assume that $||y_n - z_n || \to 0$.
Since a continuous function is uniformly continuous on any compact set and since the unit ball of $\M$
is compact, we may assume that $||y_n|| \geq 1$ and that $||z_n|| \geq 1$ for every $n$.
Let $w_n = \frac{||z_n||}{||y_n||}y_n$.
Observe that, by the definition of semi-inner-product, $\left| [u,v] \right| \leq 1$ for any $u,v \in S$.
Then, from $\left| \left[ x, \frac{y_n}{||y_n||} \right] \right| \leq 1$ and from the triangle inequality,
we obtain that
\[
\left| [x,y_n] - [x,z_n] \right| \leq \left| [x,y_n]-[x,w_n] \right| + \left| [x,w_n]-[x,z_n] \right| =
\big| ||z_n|| - ||y_n|| \big| \cdot
\]
\[
\cdot \left| \left[x, \frac{y_n}{||y_n||}\right]\right| + ||z_n|| \cdot \left| \left[x,\frac{w_n}{||w_n||}\right]-\left[x,\frac{z_n}{||z_n||}\right]
\right| \leq ||y_n - z_n|| + \kappa || w_n - z_n||.
\]
Note that $||w_n - y_n || = \big| ||y_n|| - ||z_n|| \big| \leq ||y_n - z_n|| \to 0$ and
that $||w_n-z_n|| \leq ||w_n - y_n|| + ||y_n - z_n|| \to 0$, from
which it follows that $\left| [x,y_n] - [x,z_n] \right| \to 0$.

Assume that (1) does not hold. Then there is a point $x \in S$ and sequences $y_n , z_n \in S$
such that $\left| [x,y_n] - [x,z_n] \right| = \kappa_n ||y_n - z_n ||$ where $\kappa_n \to \infty$.
We may assume that $\kappa_n > 0$ for every $n$, and since $S$ is compact,
also that $y_n \to y$ and $z_n \to z$ for some $y,z \in S$.
Note that $\kappa_n \to \infty$ implies $y = z$.
Let $\delta_n = ||y_n - z_n||$, and assume that $\delta_n > 0$ for every $n$.
Observe that as $y_n$ and $z_n$ converge to the same point, we have $\delta_n \to 0$,
and, as $[x,y]$ is continuous in $y \in S$ for every $x \in S$, we have also that
$\kappa_n ||y_n - z_n|| = \kappa_n \delta_n \to 0$.
Let $u_n = \frac{y_n}{\kappa_n \delta_n}$ and $v_n = \frac{z_n}{\kappa_n \delta_n}$.
Then $||u_n - v_n|| = \frac{1}{\kappa_n} \to 0$, and $|[x,u_n]-[x,v_n]| = 1$, and hence,
(3) does not hold.
\end{proof}

\section{Proof of Theorem~\ref{thm:main}}

Assume that $A$ is adjoint abelian.
Let $\mu$ and $\nu$ be two different eigenvalues of $A$ and let $x$ and $y$ be eigenvectors belonging to $\mu$
and $\nu$, respectively.
Then,
\[
\mu [x,y] = [Ax,y] = [x,Ay] = \nu [x,y],
\]
which yields that $x$ is transversal to $y$. Thus, any two eigenspaces,
belonging to distinct eigenvalues, are both transversal and normal, which,
in particular, proves (2) (for isometries, see this observation in \cite{KR70}).
Recall that an Auerbach basis of a Minkowski space is a basis
in which any two distinct vectors are transversal and normal to each other, and that
in every norm there is an Auerbach basis.
Note that the restriction of a norm to a linear subspace is also a norm, and thus, we may choose
Auerbach bases in each eigenspace separately, which, by the previous observation, form an Auerbach basis in
the whole space.
Let $x \in \M$, and observe that $x$ has a unique representation of the form
$x=\sum_{i=1}^k x_i$, where $x_i \in \bar{E}_i$.
To prove Theorem~\ref{thm:main}, we need the following lemma.

\begin{lem}\label{lem:decomposition}
If $z \in \bar{E}_i$ for some value of $i$, then $[z,x] = [z,x_i]$.
\end{lem}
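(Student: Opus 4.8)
The plan is to prove Lemma~\ref{lem:decomposition} by induction on $k$, the number of distinct absolute values of eigenvalues of $A$, peeling off at each step the block $\bar{E}_k$ belonging to the smallest absolute eigenvalue $\lambda_k$. The base case $k=1$ is trivial, since then $\bar{E}_1=\M$ and $x=x_1$. The engine of the induction is the following identity. If $z\in\bar{E}_i$ with $\lambda_i>0$, then $A^{2n}z=\lambda_i^{2n}z$ for every $n$; since the operators $A^{2n}$ are adjoint abelian (which follows from $[Ax,y]=[x,Ay]$ by an easy induction), and since the semi-inner-product is homogeneous in the second variable, we get for every $n$
\[
[z,x]=\lambda_i^{-2n}[A^{2n}z,x]=\lambda_i^{-2n}[z,A^{2n}x]=\Big[z,\textstyle\sum_{j=1}^k(\lambda_j/\lambda_i)^{2n}x_j\Big].
\]
When $A$ is invertible, the same computation with $A^{-2n}$ (also adjoint abelian) in place of $A^{2n}$ gives $[z,x]=[z,\sum_j(\lambda_i/\lambda_j)^{2n}x_j]$.

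For the inductive step, set $W=\bar{E}_1\oplus\dots\oplus\bar{E}_{k-1}$. This subspace is $A$-invariant, $A|_W$ has $k-1$ distinct absolute eigenvalues, and the norm $\M$ induces on $W$ is smooth with a semi-inner-product satisfying the Lipschitz condition — that property passes to subspaces, because the semi-inner-product of a smooth subspace is the one induced by the restricted norm, so the Lipschitz constant at a point of the smaller unit sphere is inherited from $\M$. Hence Lemma~\ref{lem:decomposition} holds for $A|_W$ by the inductive hypothesis. Now take $z\in\bar{E}_i$. If $1\le i\le k-1$ then $\lambda_i>0$, and I apply the displayed identity both to $x$ and to $x-x_k\in W$; the two second arguments, $\sum_{j=1}^k(\lambda_j/\lambda_i)^{2n}x_j$ and $\sum_{j=1}^{k-1}(\lambda_j/\lambda_i)^{2n}x_j$, differ only by $(\lambda_k/\lambda_i)^{2n}x_k$, whose norm tends to $0$ because $\lambda_k<\lambda_i$. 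By Lemma~\ref{lem:uniform} the function $[z,\cdot]$ is uniformly continuous on all of $\M$, so the number $[z,x]-[z,x-x_k]$ (which does not depend on $n$) is a limit of quantities tending to $0$, hence $0$; then the inductive hypothesis applied to $x-x_k\in W$ gives $[z,x]=[z,x-x_k]=[z,x_i]$. The point I want to emphasize is that the common part $\sum_{j=1}^{k-1}(\lambda_j/\lambda_i)^{2n}x_j$ of the two arguments is in general unbounded (the terms with $j<i$ blow up), so mere continuity of $[z,\cdot]$ would be useless here; this is precisely where the Lipschitz hypothesis is needed. If $i=k$ and $\lambda_k>0$, then $A$ is invertible and the second identity above gives $[z,x]=[z,\sum_j(\lambda_k/\lambda_j)^{2n}x_j]$, where now $\lambda_k/\lambda_j<1$ for all $j<k$, so the second argument converges to $x_k$ and ordinary continuity yields $[z,x]=[z,x_k]$. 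In particular this already proves the lemma in full whenever $A$ is invertible.

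The remaining case — and the step I expect to be the main obstacle — is $i=k$ with $\lambda_k=0$, i.e. $z\in\ker A$. Here $A^{2n}z=0$, the displayed identity degenerates, and the peeling argument does not reach $z$ itself. From the cases already handled together with linearity in the first variable one obtains $[z',x]=[z',x_{\operatorname{Im}}]$ for every $z'\in\operatorname{Im}A=\bar{E}_1\oplus\dots\oplus\bar{E}_{k-1}$, where $x_{\operatorname{Im}}=x-x_k$; moreover $[z,Ay]=[Az,y]=0$ shows that $[z,\cdot]$ vanishes identically on $\operatorname{Im}A$, and $\ker A$ and $\operatorname{Im}A$ are mutually transversal and normal. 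What is still missing is exactly $[z,x_{\operatorname{Im}}+x_k]=[z,x_k]$ for $z\in\ker A$, i.e. that $[z,\cdot]$ feels only the $\ker A$-component of its argument. I would try to establish this in the same spirit as above: produce, for the fixed number $[z,x]$, a family of equivalent expressions whose second arguments approach $x_k$ — exploiting homogeneity of $[z,\cdot]$ and the fact that it annihilates $\operatorname{Im}A$ — and then pass to the limit via the uniform continuity of Lemma~\ref{lem:uniform}; carrying this out without being able to invert $A$ on $\ker A$ is the delicate point of the whole proof.
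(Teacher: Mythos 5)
There is a genuine gap: the case $z \in E_k$ with $\lambda_k = 0$ is not proved, only flagged as ``the delicate point,'' and this is precisely the heart of the lemma. Everything you do for $\lambda_i > 0$ is correct and is essentially the paper's own argument reorganized: the paper inducts on $i$ from the largest eigenvalue downward, first proving $[z,w]=[z,x_i]$ for $w=\sum_{j\le i}x_j$ via the inverse of $A$ restricted to $\spn(\bigcup_{j\le i}\bar E_j)$, and then invoking Lemma~\ref{lem:uniform} to discard the norm-small tail $\sum_{j>i}(\lambda_j^2/\lambda_i^2)^n x_j$ even though the full second argument is unbounded; your induction on $k$, peeling off the smallest block and comparing the arguments for $x$ and $x-x_k$, uses the same two ingredients (powers of $A$ and its partial inverses, plus uniform continuity for unbounded arguments), and your remark about why mere continuity would not suffice is exactly right.

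However, your sketched plan for the kernel case cannot work as stated. Since $Az=0$, no power of $A$ or of any partial inverse relates $[z,x]$ to $[z,\cdot]$ evaluated at a perturbed argument, so there is no ``family of equivalent expressions'' to pass to the limit with. Moreover, the facts you do establish --- that $[z,\cdot]$ vanishes on $\operatorname{Im}A$ and that $\ker A$ and $\operatorname{Im}A$ are mutually transversal and normal --- do not imply $[z,x_{\mathrm{Im}}+x_k]=[z,x_k]$, because the semi-inner-product is only homogeneous, not additive, in the second variable; additivity over the splitting is exactly the content being proved. The paper's Case 3 resolves this with an entirely different, geometric argument: it shows that for $e_1=x_f/\|x_f\|$ and any unit $v\in\spn\{x_k,z\}$, the section $S\cap\spn\{e_1,v\}$ satisfies the ODE $f'(x)=-xf(x)/(1-x^2)$, $f(0)=1$, hence is a Euclidean circle in the coordinates of the Auerbach basis; the induced semi-inner-product on that plane (and on the three-dimensional $G=\spn\{x_f,x_k,z\}$) is therefore the direct sum $[\,.\,,.\,]|_F+[\,.\,,.\,]|_{E_k}$, which gives the missing identity. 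Some idea of this kind --- forcing the relevant sections of the unit sphere to be ellipses --- is needed; without it the proof is incomplete.
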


\begin{proof}
Assume that $z \in \bar{E}_i$ for some $i \in \{1,2, \ldots, k\}$.

\emph{Case 1}, $i = 1$.\\
If $\lambda_1 = 0$, then $A$ is the zero operator, and the assertion immediately follows.
Let us assume that $\lambda_1 > 0$.
As $A$ is adjoint abelian, we have that $[A^2z,x]=[Az,Ax]=[z,A^2x]$.
Observe that $A^2 z=\lambda_1^2 z$, and that $A^2x=\sum_{i=1}^k \lambda_i^2 x_i$.
Thus,
\begin{equation}\label{eq:first_step}
[z,x]=\left[ z, \sum_{i=1}^k \left( \frac{\lambda_i^2}{\lambda_1^2} \right)^n x_i \right]
\end{equation}
for every positive integer $n$.
Since $[.,.]$ is continuous in both variables, we obtain that
the limit of the right-hand side of (\ref{eq:first_step}) is $[z, x_1]$, and hence, $[z,x] = [z,x_1]$.

\emph{Case 2}, $i > 1$ and $\lambda_i \neq 0$.\\
We prove by induction on $i$.
Let us assume that $[y,x]=[y,x_j]$ for every $y \in \bar{E}_j$ and $j = 1,2, \ldots, i-1$.

First, set $w = \sum_{j=1}^i x_j$ and $F_i = \spn (\bigcup_{j=1}^i \bar{E}_j)$.
We show that $[z,w] = [z,x_i]$.
Note that $A|_{F_i}$ is invertible, adjoint abelian, and its inverse is also
adjoint abelian.
Let $B_i$ denote the inverse of $A |_{F_i}$, and observe that the absolute values of the eigenvalues of $B_i$ are $\frac{1}{\lambda_j}$ and its eigenspaces are $E_j$ and $E_{-j}$, where $j=1,2,\ldots, i$.
Thus, we have $\frac{1}{\lambda_i^2}[z,w]=[B_i^2z,w]=[B_iw,B_iw]=[z,B_i^2w]$ and
\[
[z,w]=\left[ z, \sum_{j=1}^i \left( \frac{\lambda_i^2}{\lambda_j^2} \right)^n x_j \right]
\]
for every positive integer $n$.
By the continuity of the semi-inner-product, and since the limit of the right-hand side is $[z,x_i]$,
we have the desired equality.

Now we show that $[z,x] = [z,x_i]$.
Similarly like before, we obtain that
\[
[z,x]= \left[ z,\sum_{j=1}^k \left( \frac{\lambda_j^2}{\lambda_i^2} \right)^n x_j \right]
\]
for every positive integer $n$.
Observe that $\lim_{n \to \infty} \left|\left| \sum_{j=i+1}^k \left( \frac{\lambda_j^2}{\lambda_i^2} \right)^n x_j
\right|\right| = 0$,
and that, by the previous paragraph,
$\left[ z,\sum_{j=1}^i \left( \frac{\lambda_j^2}{\lambda_i^2} \right)^n x_j \right] = [z,x_i]$
for every positive integer $n$.
Thus, by Lemma~\ref{lem:uniform}, we have that $[z,x] = [z,x_i]$.

\emph{Case 3}, $i>1$ and $\lambda_i = 0$.\\
Note that in this case $i = k$ and $\bar{E}_k = E_k$.
Let $F=\spn(\bigcup_{i=1}^{k-1}\bar{E}_i)$ and set $x_f = \sum_{j=1}^{k-1} x_j$.
By Cases 1 and 2, we have $[x_f,x_k] = \sum_{j=1}^{k-1} [x_j,x_k] = 0$.
On the other hand,
\[
[x_k,x_f]=\left[ x_k, A^2 \sum_{j=1}^{k-1} \frac{1}{\lambda_j^2} x_j \right] =
\left[ A^2 x_k, \sum_{j=1}^{k-1} \frac{1}{\lambda_j^2} x_j\right] = 0.
\]
Thus, we obtained that $F$ and $E_k$ are both transversal and normal.
Now we let $G=\spn\{ x_f,x_k,z \}$.

\emph{Subcase 3.1}, $\dim G = 2$.\\
Let $e_1=\frac{x_f}{||x_f||}$ and $e_2=\frac{x_k}{||x_k||}$.
Since $F$ and $E_k$ are transversal and normal, the pair $\{ e_1, e_2 \}$ is an Auerbach basis in $G$.
By Cases 1 and 2, $[.,.]|_F = \sum_{j=1}^{k-1} [.,.]|_{\bar{E}_j}$, which yields that
$[e_1,\alpha_1 e_1 + \alpha_2 e_2]=\alpha_1$ for any $\alpha_1, \alpha_2 \in \Re$.

Now we identify $G$ with the Euclidean plane $\Re^2$ by $\alpha_1 e_1 + \alpha_2 e_2 \mapsto (\alpha_1,\alpha_2)$;
or in other words, we assume that $e_1$ and $e_2$ are the standard basis of an underlying Euclidean plane.
We need to show that $[e_2,\alpha_1 e_1 + \alpha_2 e_2]=\alpha_2$ for any $\alpha_1,\alpha_2 \in \Re$,
or, equivalently, that the unit circle $S \cap G$ of the subspace $G$ is the Euclidean unit circle.

Since $\M$ is smooth, $S \cap G$ is a convex differentiable curve.
Consider the Descartes coordinate system induced by the standard basis $e_1$ and $e_2$,
and note that the lines $x=1$, $x=-1$, $y=1$ and $y=-1$ support $\conv S$.
Thus, for every value of $x \in (-1,1)$, there is exactly one point of $S$ with $x$ as its $x$-coordinate
and nonnegative $y$-coordinate.
We represent the points of $S \cap G$ with nonnegative $y$-coordinates as the union of the graph of a function
$x \mapsto f(x)$ with $x \in [-1,1]$, and (possibly) two segments on the lines with equations $x=1$ and $x=-1$.
We express the equality $[e_1,\alpha_1 e_1 + \alpha_2 e_2]=\alpha_1$ with the function $f$.

We may assume that $v = \alpha_1 e_1 + \alpha_2 e_2 \in S \cap G$.
Consider the case that $v=x_0 e_1 + f(x_0) e_2$ for some $x_0 \in (-1,1)$.
Then we have $[e_1,v] = [e_1,x_0e_1+f(x_0)e_2]=x_0$.
Let $v_p$ denote the projection of $e_1$ onto the line $\{ \lambda v : \lambda \in \Re \}$
parallel to the supporting line of $\conv S$ at $v$, and let
$e_p$ denote the projection of $v$ onto the line $\{ \lambda e_1 : \lambda \in \Re \}$
parallel to the supporting line of $\conv S$ at $e_1$ (cf. Figure~\ref{fig:0_2dim}).
Let $v_p = \mu v$ and observe that $e_p = x_0 e_1$.
We note that, by the construction of the semi-inner-product described, for example in \cite{L61},
we have that $[e_1,v] = \mu$ and $[v,e_1] = x_0$.
Hence the triangle with vertices $o, e_1, v$ is similar to the triangle with vertices $o, v_p, e_p$,
with similarity ratio $x_0$.
From this, we obtain that $v_p = x_0 v = x_0^2 e_1 + x_0 f(x_0) e_2$.
As the line, passing through $e_1$ and $v_p$, is parallel to the supporting line of $\conv S$ at $v$,
we have
\[
f'(x_0)= -\frac{x_0 f(x_0)}{1-x_0^2},
\]
which is an ordinary differential equation for $f$ with the initial condition $f(0)=1$.

\begin{figure}[here]
\includegraphics[width=0.85\textwidth]{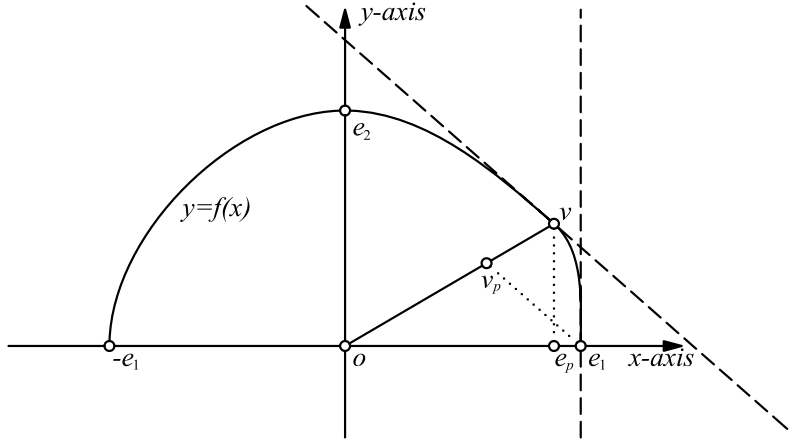}
\caption[]{An illustration for Subcase 3.1}
\label{fig:0_2dim}
\end{figure}

We omit an elementary computation that shows
that the solution of this differential equation is $y=\sqrt{1-x^2}$.
Thus, we obtain that $S \cap G$ is the Euclidean unit circle, which yields, in particular, that
$[z,x_f+x_k] = [z,x_k]$.

\emph{Subcase 3.2}, $\dim G = 3$.\\
Set $e_1=\frac{x_f}{||x_f||}$ and choose an Auerbach basis $\{e_2, e_3 \}$ in $\spn \{ x_k, z \}$.
Then the set $\{ e_1, e_2, e_3 \}$ is an Auerbach basis in $G$.
Furthermore, since $F$ and $E_k$ are transversal and normal, $\{ e_1, v \}$ is an Auerbach basis in its span
for any $v \in \spn \{ x_k, z \}$ with $||v||=1$.
Thus, applying the argument in Subcase 3.1 for the subspace $\spn \{ e_1, v \}$,
we obtain that $S \cap \spn \{ e_1, v \}$ is the ellipse with semiaxes $e_1$ and $v$.
Note that this property and $S \cap \spn \{ e_1, e_2 \}$ determines the norm.

Consider the semi-inner-product defined by $[\beta_1 e_1 + \beta_2 e_2 + \beta_3 e_3, \alpha_1 e_1 + \alpha_2 e_2 + \alpha_3 e_3]' = \beta_1 \alpha_1 + [\beta_2 e_2 + \beta_3 e_3, \alpha_2 e_2 + \alpha_3 e_3]$.
We show that $[.,.]'$ and $[.,.]$ define the same norm, which, as a smooth norm uniquely determines
its semi-inner-product, yields that $[.,.]' = [.,.]$.

Let $v = \alpha_2 e_2 + \alpha_3 e_3$ with $||v|| = 1$ be arbitrary.
Note that if $||\mu e_1 + \nu v||=1$, then
\[
[\mu e_1 + \nu v, \mu e_1 + \nu v]' =  \mu^2 + \nu^2 [v,v] = \mu^2 + \nu^2 =1,
\]
which, in $\spn \{ e_1, v\}$, is the equation of the ellipse with semiaxes $e_1$ and $v$.
As the restrictions of $[.,.]'$ and $[.,.]$ to $\spn \{ e_2, e_3 \}$ are clearly equal,
we obtain that $[.,.]=[.,.]'$, which, in particular, implies that 
$[z,x_f + x_k] = [z,x_k]$.
\end{proof}

By Lemma~\ref{lem:decomposition}, we have that (1) of Theorem~\ref{thm:main} holds.
Thus, it remains to show that (3) also holds.
Without loss of generality, let us assume that $k=1$, and that $\lambda_1 = 1$.
Then every $x \in \M$ can be decomposed as $x=x_1+y_1$
with $x_1 \in E_1$ and $y_1 \in E_{-1}$. 
Hence,
\[
[A(x_1+y_1),A(x_1+y_1)]=[x_1+y_1,A^2(x_1+y_1)]=[x_1+y_1,x_1+y_1],
\]
and thus, $A$ is an isometry.

Finally, we show that if (1), (2) and (3) holds, then $A$ is adjoint abelian.
Let $x=\sum_{i=1}^k x_i$ and $y=\sum_{i_1}^k y_i$ with $x_i,y_i \in \bar{E}_i$.
Assume, first, that $\lambda_k \neq 0$, which means that $A$ is invertible.
Note that $\bar{E}_i$ is an invariant subspace of $A$ for every value of $i$, and
that $\left( \frac{1}{\lambda_i}A \right)^{-1} = \frac{1}{\lambda_i}A = \id$ on $\bar{E}_i$.
Hence,
\[
[Ax,y]=\sum_{i=1}^k [Ax_i,y_i]=\sum_{i=1}^k \lambda_i \left[\frac{1}{\lambda_i}A x_i,y_i\right]=
\sum_{i=1}^k \lambda_i \left[x_i, \left( \frac{1}{\lambda_i}A \right)^{-1} y_i\right]=
\]
\[
\sum_{i=1}^k \lambda_i \left[x_i, \frac{1}{\lambda_i}A y_i\right]= \sum_{i=1}^k [x_i, A y_i] = [x,Ay],
\]
and the assertion follows.
If $A$ is not invertible, we may apply a slighly modified argument.

{\bf Acknowledgements.}
The author is indebted to \'A. G. Horv\'ath for proposing this project and for his helpful comments.

\end{document}